\numberwithin{equation}{section}
\theoremstyle{plain}
\newtheorem{theorem}[equation]{Theorem}
\newtheorem{lemma}[equation]{Lemma}
\theoremstyle{definition}
\newtheorem*{acknowledgment}{Acknowledgment}
\newtheorem*{conclusion}{Conclusion}
\theoremstyle{remark}
\newtheorem{remark}[equation]{Remark}
\newcommand{\bR}{\mathbb R}
\newcommand{\bP}{\mathbb P}
\newcommand{\bE}{\mathbb E}
\newcommand{\bH}{\mathbb H}
\newcommand{\set}[1]{\left\{#1\right\}}
\renewcommand{\epsilon}{\varepsilon}
\newcommand{\Co}{\mathcal{C}^{1,2}}
\newcommand{\Ho}{\mathcal{C}^{1+\alpha/2,2+\alpha}}
\renewcommand{\qedsymbol}{$\blacksquare$}
\begin{document}
\title[Regularity of a degenerate parabolic equation]{Regularity of a degenerate parabolic equation appearing in Ve\v{c}e\v{r}'s unified pricing of Asian options}

\author[H. Dong]{Hongjie Dong}
\address[H. Dong]{Division of Applied Mathematics, Brown University, 182 George Street, Box F, Providence, RI 02912, USA}
\email{Hongjie\_Dong@brown.edu}

\author[S. Kim]{Seick Kim}
\address[S. Kim]{Department of Mathematics, Yonsei University, 50 Yonsei-ro, Seodaemun-gu, Seoul 120-749, Republic of Korea}
\email{kimseick@yonsei.ac.kr}
\thanks{S. Kim is partially supported by NRF Grant No. 2012R1A1A2040411.}

\subjclass[2000]{35B65, 35K20, 91B28}
\keywords{asian options; degenerate parabolic equation; regularity of solutions}

\begin{abstract}
Ve\v{c}e\v{r} \cite{Vecer02} derived a degenerate parabolic equation with a boundary condition characterizing the price of Asian options with generally sampled average.
It is well understood that there exists a unique probabilistic solution to such a problem but it remained unclear whether the probabilistic solution is a classical solution.
We prove that the probabilistic solutions to Ve\v{c}e\v{r}'s PDE are regular.
\end{abstract}

\maketitle

\section{Introduction and Main result}
An Asian option is a specialized form of an option where the payoff is not determined by the underlying price at maturity, but is connected to the average value of the underlying security over certain time interval.
In an interesting article \cite{Vecer02}, Ve\v{c}e\v{r} presented a unifying PDE approach for pricing Asian options that works for both discrete and continuous arithmetic average.
By using a dimension reduction technique, he derived a simple degenerate parabolic equation in two variables $(t,x)\in [0,T)\times\bR$
\begin{equation}
\label{eq:I03c}
u_t+\frac{1}{2}
\left(x-e^{-\int_0^t d\nu(s)}q(t) \right)^2 \sigma^2 u_{xx}=0\quad
\end{equation}
supplemented by a terminal condition
\begin{equation}
\label{eq:I04d}
u(T,x)=(x-K)_{+}:= \max(x-K,0),
\end{equation}
which gives the price of the Asian option.
Here, $\nu(t)$ is the measure representing the dividend yield, $\sigma$
is the volatility of the underlying asset, $q(t)$ is the trading strategy
given by
\[
q(t)=\exp\left\{-\int_t^T \,d\nu(s)\right\}\cdot\int_t^T
\exp\left\{-r(T-s)+\int_s^T \,d\nu(\tau)\right\}\,d\mu(s),
\]
where $r$ is the interest rate and $\mu(t)$ represents a general weighting
factor.
We ask readers to refer to \cite{Vecer02} for the derivation of
the equation \eqref{eq:I03c}.
It should be noted that the function $b$ given by
\begin{align}
\label{eq:I04x}
b(t) &:=e^{-\int_0^t d\nu(s)}q(t)\\
\nonumber
&= \exp\left\{-\int_0^T \,d\nu(s)\right\}\cdot
\int_t^T \exp\left\{-r(T-s)+\int_s^T \,d\nu(\tau)\right\}\,d\mu(s)
\end{align}
is a nonnegative monotone decreasing function defined on $[0,T]$, and the problem now read as follows:
\begin{equation}
\label{eq:I01a}
u_t+\tfrac{1}{2}\left(x-b(t)\right)^2\sigma^2 u_{xx}=0.
\end{equation}
\begin{equation}
\label{eq:I02b}
u(T,x)=(x-K)_{+}.
\end{equation}
It is mathematically natural to investigate existence, uniqueness, and regularity of a solution to the degenerate parabolic problem \eqref{eq:I01a}, \eqref{eq:I02b}.
The existence and uniqueness question can be easily answered by using a probabilistic argument.
Indeed, the problem \eqref{eq:I01a}, \eqref{eq:I02b} admits a unique probabilistic solution
\begin{equation}
\label{eq:I09u}
u(t,x):=\bE f(X_{T}(t,x)),
\end{equation}
where $f(y):=(y-K)_{+}$ and $X_s=X_s(t,x)$ is the stochastic process that satisfies, for $t\in[0,T]$ and $x\in\bR$, the following SDE:
\begin{equation}
\label{eq:I08z}
\left\{\begin{array}{l}
dX_s=(X_s-b_s)\sigma\,dw_s,\quad s\ge t, \quad (\,b_s=b(s)\,)\\
X_{t}=x.
\end{array}\right.
\end{equation}
On the other hand, regularity of the probabilistic solution $u$ defined in
\eqref{eq:I09u} is a  more subtle issue.
There is a classical result, originally due to Freidlin, saying that
if $f$ in \eqref{eq:I09u} is twice continuously differentiable and satisfies a certain growth condition,
then $u(t,x)$ defined  by \eqref{eq:I09u} is meaningful
and twice differentiable with respect to $x$ continuously in $(t,x)$, etc.;
see e.g. \cite[Theorem V.7.4]{Krylov}.
However, in our case, $f$ is only Lipschitz continuous and thus
the above mentioned result is not applicable.
As a matter of fact, it is not trivial whether or not the problem
\eqref{eq:I03c}, \eqref{eq:I04d} admits any classical or strong solution.
This regularity question was studied by one of the authors in \cite{Kim}.
It is shown in \cite{Kim} that if $K=0$ (in this case we have the fixed strike Asian call option) and if $b(t)$ is a monotone decreasing
Lipschitz continuous function, then the probabilistic solution $u$ defined
in \eqref{eq:I09u} is indeed a classical solution.
We note that $b(t)$ satisfies such an assumption if $d\mu(t)=\rho(t)\,dt$ for some $\rho\in L^\infty([0,T])$ satisfying $\rho(t)\geq \rho_0>0$; i.e.  the measure $\mu(t)$ is absolutely continuous with respect to the Lebesgue measure $dt$ and its density function $\rho(t)$ is strictly positive and bounded.
This excludes the cases when $\rho(t)$ is a nonnegative step function that vanishes on some intervals or when $\mu(t)$ is a linear combination of Dirac delta functions, which corresponds to discretely sampled Asian options.
These two cases are important in practice but have been left out in \cite{Kim}.

The goal of this article is, roughly speaking, to show that even in those cases, the probabilistic solution of problem \eqref{eq:I03c}, \eqref{eq:I04d} is still a classical solution.
As a matter of fact, we even give an improvement to the main result of \cite{Kim}.
To be precise, we will assume that the function $b(t)$ has the following properties.
\begin{enumerate}[i)]
\item
$b(t)$ is nonnegative and monotone decreasing on $[0,T]$ 
\item
$b(t)$ is discontinuous at most finitely many points $t_1<\cdots < t_n$ in $[0,T]$.
\item
$b(T)=0$ and there is an $\epsilon>0$ such that
\begin{enumerate}
\item
 $b(t)=0$ on $[T-\epsilon, T]$ if $K \neq 0$.
\item
$m_1 \le -b'(t) \le m_2$ a.e. on $[T-\epsilon, T]$ for some $m_1, m_2 >0$  if $K=0$.
\end{enumerate}
\end{enumerate}
It is clear that condition ii) allows us to treat the discrete sampling case.
We point out that in \cite{Kim} it is assumed that $K=0=b(T)$ and $m_1 \le -b'(t) \le m_2$ a.e. on $[0,T]$, so that $b(t)$ in \cite{Kim} satisfies the above properties.
We also note that the condition iii) is technical but is a generic one in the sense that it can be always realized in practice by perturbing sampling strategy.
In particular, note that by \eqref{eq:I04x}, we have $b(T)=0$ unless the measure $\mu(t)$ has a point mass at $T$.
We use the notation 
\begin{equation*}
\bH_T:= [0,T)\times \bR,\quad
\bar{\bH}_T:= [0,T]\times \bR,\quad
\mathring{\bH}_T:=\bH_T\setminus \left(\bigcup_{i=1}^n \set{t_i}\times \bR\right)
\end{equation*}
in our main result stated below.

\begin{theorem}
\label{thm:I01}
Let $b(t)$ satisfy the conditions i) - iii) above.
Suppose $u$ is the probabilistic solution of the problem \eqref{eq:I01a},
\eqref{eq:I02b};
i.e., $u(t,x)$ is defined by \eqref{eq:I09u}.
Then $u(t,x)$ is continuous in $\bar{\bH}_T$ and satisfies the terminal
condition \eqref{eq:I02b}.
Moreover, $u(t,x)$ is twice differentiable with respect
to $x$ continuously in $\bH_T$, differentiable with respect to $t$ continuously
in $\mathring{\bH}_T$, and satisfies the equation \eqref{eq:I01a} in $\mathring{\bH}_T$.
\end{theorem}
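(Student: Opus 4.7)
The plan is to exploit the piecewise constant structure of $b(t)$ and construct $u$ backward in time, one subinterval $(t_{i-1},t_i]$ at a time. On each such strip $b\equiv\beta_i$ is constant, so with the substitution $Y_s=X_s-\beta_i$ the SDE \eqref{eq:I08z} becomes the geometric Brownian motion $dY_s=\sigma Y_s\,dw_s$ with $Y_t=x-\beta_i$, and equation \eqref{eq:I01a} reduces in the shifted variable $y=x-\beta_i$ to the classical Black--Scholes equation. Working on each strip separately lets me apply standard smoothness results (such as \cite[Theorem V.7.4]{Krylov}) even though the merely Lipschitz terminal datum of the full problem would not permit a direct application.

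For the base case I analyze $u$ on $(t_k,T]$, where $b\equiv\beta_{k+1}=0$ and the terminal datum is $(x-K)_+$. The probabilistic representation with $X_T(t,x)=x\exp(\sigma(w_T-w_t)-\sigma^2(T-t)/2)$ gives an explicit Black--Scholes-type formula that is real-analytic on each of the open half-lines $\{x>0\}$ and $\{x<0\}$, where the operator is uniformly non-degenerate on compacts. The delicate point is the boundary $x=0=b(T)$: using the Gaussian tail asymptotics $N(-d)\sim(2\pi)^{-1/2}d^{-1}e^{-d^2/2}$ as $d\to\infty$ together with the hypothesis $K\neq 0$, I would show that $u$ and each of its $x$-derivatives extend continuously across $x=0$, vanishing faster than any polynomial on the side where the payoff is identically zero and having continuous bounded extensions on the other side. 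This yields $u\in C^{1,2}((t_k,T]\times\bR)$, matches the terminal datum at $t=T$, and produces $u(t_k,\cdot)\in C^2(\bR)$ whose first two derivatives have at most linear growth.

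The inductive step then reapplies the same scheme. Assuming that $u(t_i,\cdot)\in C^2(\bR)$ has polynomially bounded derivatives and $u$ is $C^{1,2}$ (in $x$) on $[t_i,T]\times\bR$, the strong Markov property gives for $t\in(t_{i-1},t_i]$
\begin{equation*}
u(t,x)=\bE\bigl[u(t_i,X_{t_i}(t,x))\bigr],\qquad X_{t_i}(t,x)=\beta_i+(x-\beta_i)e^{M(t)},
\end{equation*}
where $M(t)=\sigma(w_{t_i}-w_t)-\sigma^2(t_i-t)/2$. Differentiating under the expectation (justified by polynomial growth of the derivatives of $u(t_i,\cdot)$ and the fact that $e^{M(t)}$ has moments of every order) produces continuous expressions for $u_x$ and $u_{xx}$ on $[t_{i-1},t_i]\times\bR$. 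At the degenerate point $x=\beta_i$ one has $X_{t_i}\equiv\beta_i$, and the formulas reduce to $u(t,\beta_i)=u(t_i,\beta_i)$, $u_x(t,\beta_i)=u_x(t_i,\beta_i)$, $u_{xx}(t,\beta_i)=u_{xx}(t_i,\beta_i)e^{\sigma^2(t_i-t)}$, confirming $C^2$-regularity there. Combining this with Krylov's theorem on the non-degenerate half-planes $\{x>\beta_i\}$ and $\{x<\beta_i\}$ gives $u\in C^{1,2}((t_{i-1},t_i)\times\bR)$ and verifies \eqref{eq:I01a} on the open strip. Iterating from $i=k$ down to $i=1$ establishes the regularity and PDE conclusions on $\bH_T'$; $u_t$ necessarily jumps across each $t_i$ since $(x-b(t))^2$ does while $u_{xx}$ does not, matching the restriction to $\bH_T'$ in the statement.

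Global continuity on $\overline\bH_T$ and the matching of the terminal condition follow from continuity of $f$, polynomial growth of $X_s(t,x)$, and joint continuity of $(t,x)\mapsto X_s(t,x)$ in $L^p$ for every $p\geq 1$. The principal obstacle is the analysis at the degenerate points $x=\beta_i$, and above all the base case at $x=b(T)=0$ where $(x-K)_+$ fails to be $C^2$: the hypothesis $K\neq b(T)$ separates the singularity of the payoff from the degenerate locus of the operator and is exactly what allows the Gaussian smoothing of Black--Scholes to regularize the data at $x=0$, thereby supplying the $C^2$ terminal data that feeds the induction on all earlier strips.
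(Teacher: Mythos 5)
Your proposal is correct in outline, but it takes a genuinely different route from the paper's in the technical core. The paper also reduces everything to the last strip, where $b\equiv b(T)$ and the only difficulty is the degenerate line $x=b(T)$ meeting merely Lipschitz data (this is where $K\neq b(T)$ enters), but it never differentiates the explicit Black--Scholes formula: it passes to $v$ via put--call parity (so that $v$ vanishes identically on one side of the degenerate line), proves a barrier-type sup bound $0\le v\le C(\ln|K/x|)^{-1}\exp\{-(\ln|K/x|)^2/\sigma^2T\}$ by an exit-time comparison and the reflection principle (Lemma~\ref{lem5.2}), and then converts this smallness of $v$ into smallness of $v_x,v_{xx},v_t$ near $x=0$ by interior Schauder estimates on parabolic cylinders scaled to the distance from the degenerate line; for all earlier times it then invokes Krylov's Theorem V.7.4 in one stroke, since the terminal data at a time $T''<T$ is now $C^2$. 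You instead compute the derivatives of the closed-form lognormal representation directly, using Gaussian tail asymptotics near $x=0$, and you replace the citation of Krylov's theorem by a hands-on backward induction over the strips with differentiation under the expectation (your formulas at $x=\beta_i$, e.g.\ $u_{xx}(t,\beta_i)=u_{xx}(t_i,\beta_i)e^{\sigma^2(t_i-t)}$, are correct, and the dominated-convergence justification goes through since $u_x(t_i,\cdot),u_{xx}(t_i,\cdot)$ stay bounded by the base case). Your route is more elementary and yields stronger (all-order, super-polynomial) flatness at the degenerate line, but it depends entirely on having the explicit formula; the paper's barrier-plus-Schauder scheme, inherited from \cite{Kim}, is softer and would survive without closed forms. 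Two small points to fix in a write-up: for $K<0$ the payoff is not identically zero near $x=0$, so you must first subtract the linear part ($u=(x-K)+v$ with $v$ the put value, exactly the paper's $v$ in \eqref{eq:M11f}) before running the tail estimates; and the base-case regularity should be claimed on $(t_k,T)\times\bR$ (plus $C^\infty$ of $u(t_k,\cdot)$), not up to $t=T$, where $u_{xx}$ blows up near $x=K$.
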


It is clear from \eqref{eq:I01a} that $u_t$ cannot be continuous where $u_{xx}$ is continuous but $b(t)$ is not continuous.
Therefore, if  the measure $\mu(t)$ contains a pure point mass (i.e. a discrete sampling), then $u_t$ cannot be continuous in the entire $\bH_T$.

\begin{conclusion}
The case $K=0$ corresponds to the fixed strike Asian call option. 
In that case, it is recommended to employ a continuous sampling near the terminal time $T$.
Except for the case $K=0$, it is recommended not to sample near the terminal time to ensure that the solution becomes a classical one.
\end{conclusion}

A couple of further remarks are in order.

\begin{remark}
Suppose $K=b(T) \neq 0$ and let $T'=\inf\set{t\in [0,T]:b(t)=K}$.
It is a matter of computation to verify that probabilistic solution
$u$ of the problem \eqref{eq:I01a}, \eqref{eq:I02b} in $[T',T]\times \bR$ is
given by $u(t,x)=(x-K)_{+}$.
Of course, $u$ is not twice continuously differentiable with respect to $x$
there.
\end{remark}

\begin{remark}
In Ve\v{c}e\v{r}'s PDE method, the price of Asian option is determined by
$u(0,\cdot)$.
Theorem~\ref{thm:I01} suggests that to minimize numerical error in computing
$u(0,\cdot)$ by finite difference methods, one should include in time grids all the points where $b(t)$ is discontinuous (i.e., discrete sampling points).
\end{remark}

\section{Proof of Theorem~\ref{thm:I01}}
For $(t,x) \in \bar{\bH}_T$, let $X_s=X_s(t,x)$ be the stochastic process
which satisfies \eqref{eq:I08z}.
It is well known that such a process $X_s$ exists; see e.g.,
\cite[Theorem V.1.1]{Krylov}.
The probabilistic solution $u$ of the
problem \eqref{eq:I01a}, \eqref{eq:I02b} is then given by
the formula \eqref{eq:I09u}.
It is then evident that $u$ is continuous in $\bar{\bH}_T$ and satisfies
the terminal condition \eqref{eq:I02b}.
If $f$ in \eqref{eq:I09u} were twice continuously differentiable,
then, as it was mentioned in the introduction,
the theorem would follow from \cite[Theorem V.7.4]{Krylov}.
But this is clearly not the case since $f(y)=(y-K)_{+}$ is merely a Lipschitz
continuous function.
However, it should be pointed out that $u$ also has the representation
\[
u(t,x)=\bE u(X_{\tilde T}(t,x)),\quad \forall \tilde T\in [t,T].
\]
Therefore, if $u(\tilde T,\cdot)$ is twice continuously differentiable for some
$\tilde T\in (0,T)$, then we would have the second part of the theorem with
$T$ replaced by $\tilde T$; i.e., $u(t,x)$ is twice
differentiable with respect to $x$ continuously in $\bH_{\tilde T}$, differentiable
with respect to $t$ continuously in $\mathring{\bH}_{\tilde T}$, and satisfies the equation
\eqref{eq:I01a} in $\mathring{\bH}_{\tilde T}$.

Therefore, in the case when $K=0$, we have $m_1 \le -b'(t) \le m_2$ a.e. on $[T-\epsilon, T]$ for some $m_1, m_2>0$, and thus by \cite[Theorem~1.10]{Kim}, we are done.
It only remains to consider the case when $K\neq 0$ and $b(t)=0$ on $[T-\epsilon, T]$.
By the above observation, we have the theorem if we show that
$u\in \Co_{t,x;\; loc}([T-\epsilon,T)\times\bR)$ and satisfies the equation \eqref{eq:I01a}
there.
Therefore, it will be enough for us to prove that the probabilistic solution $u$ of the problem
\begin{align}
\label{eq:P01a}
u_t+\frac{1}{2} \sigma^2 x^2 u_{xx}=0 \quad\text{in}\quad \bH_T,\\
\label{eq:P01b}
u(T,x)=(x-K)_{+}
\end{align}
is a classical solution.
By the Black-Scholes-Merton's options pricing formula, we know its solution is $C^\infty$ in both $x$ and $t$ in $\bH_T$ and thus we are done.
\hfill\qedsymbol

\section{Appendix}
We give a self-contained proof that the probabilistic solution $u(t,x)$ of the problem \eqref{eq:P01a}, \eqref{eq:P01b} is a classical solution without invoking Black-Scholes-Merton's formula.
Let $Y_s=Y_s(x)$ be the process satisfying the stochastic equation
\begin{equation}
\label{eq3.37}
dY_s=\sigma Y_s\,dw_s,\quad Y_0=x.
\end{equation}
It is easy to verify that $Y_s$ is given by
$Y_s= x e^{\sigma w_s-s/2}$.
Then $u(t,x)$ is given by
\begin{equation}
\label{eq:M02a}
u(t,x)=\bE(Y_{T-t}(x)-K)_{+}=\bE\left(xe^{\sigma w_{T-t}-(T-t)/2}-K\right)_{+}.
\end{equation}
Since $Y_s$ is a martingale and $(y-K)_{+}=(y-K)+(K-y)_{+}$, we also have
\begin{equation}
\label{eq:M02b}
u(t,x)=x-K+\bE\left(K-xe^{\sigma w_{T-t}-(T-t)/2}\right)_{+}.
\end{equation}
The above computations lead us to define
\begin{equation}
\label{eq:M11f}
v(t,x):=
\left\{\begin{array}{l}
\bE\left(xe^{\sigma w_{T-t}-(T-t)/2}-K\right)_{+} \quad\text{if}\quad K>0,\\
\bE\left(K-xe^{\sigma w_{T-t}-(T-t)/2}\right)_{+} \quad\text{if}\quad K<0.
\end{array}\right.
\end{equation}
By \eqref{eq:M02a} and \eqref{eq:M02b}, we find that
$u\in\Co_{t,x;\; loc}(\bH_T)$ and satisfies \eqref{eq:P01a} if and only if
$v\in\Co_{t,x;\; loc}(\bH_T)$ and satisfies \eqref{eq:P01a}.
Denote
\begin{equation*}
\bH_T^{+}:=[0,T)\times(0,\infty),\quad
\bH_T^{-}:=[0,T)\times(-\infty,0).
\end{equation*}
It is easy to check that $v\equiv 0$ in $\bH_T\setminus \bH_T^{+}$ if $K>0$ and
$v\equiv 0$ in $\bH_T\setminus\bH_T^{-}$ if $K<0$.
Also, by using an approximation argument similar to the one used
in \cite{Kim}, we find that $v$ belongs to $\Ho_{t,x, loc}(\bH_T^{+}\cup \bH_T^{-})$ and satisfies the equation \eqref{eq:P01a} in $\bH_T^{+}\cup \bH_T^{-}$
regardless the sign of $K$.
Therefore, the proof will be complete if we show
\begin{equation}
\label{eq4.10}
\lim_{x\to 0}\,(|v_x|+|v_{xx}|+|v_t|)(t,x)=0\quad
\text{locally uniformly in }t\in [0,T).
\end{equation}

\begin{lemma}
Let $v$ be defined by \eqref{eq:M11f} and denote
\begin{equation*}
Q:=
\left\{
\begin{array}{l}
{[0,T)\times (0,K)}
\quad\text{if}\quad K>0, \\
{[0,T)\times (K,0)}
\quad\text{if}\quad K<0.
\end{array}\right.
\end{equation*}
Then we have the following estimate for $v(t,x)$ in $Q$:
\begin{equation}
\label{eq:M07g}
0\leq v(t,x)\leq \sqrt{\frac{2}{\pi}} \frac{\sigma K\sqrt{T}}{\ln |K/x|}
\exp\set{-\frac{(\ln|K/x|)^2}{\sigma^2 T}}.
\end{equation}
\end{lemma}

\begin{proof}
We shall assume that $K>0$ and carry out the proof.
The proof for the case when $K<0$ is parallel and shall be omitted.
First of all, it is clear from \eqref{eq:M11f} that $v\geq 0$.
For any $(t,x)\in Q$, we define the process $Z_s=Z_s(t,x)=(t+s,Y_s(x))$,
where $Y_s=Y_s(x)=xe^{\sigma w_s-s/2}$ satisfies the stochastic equation
\eqref{eq3.37} above.
Let $\tau=\tau(t,x)$ be the first exit time of $Z_s(t,x)$ from $Q$.
We define $\tilde v(t,x)$ by
\begin{equation*}
\tilde v(t,x)=\bE g(Z_\tau(t,x))=\bE g(t+\tau,Y_\tau(x)),
\end{equation*}
where the values of $g=g(s,y)$ on the parabolic boundary $\partial_p Q$ of
$Q$ are given by
\begin{equation*}
\left\{\begin{array}{l}
g(T,y)=0\quad\text{for}\quad 0<y<K,\\
g(s,0)=0\quad\text{and}\quad g(s,K)=K \quad\text{for}\quad 0\le s\le T.
\end{array}\right.
\end{equation*}
We claim that $v\leq \tilde v$ in $Q$.
To see this, first note that
\begin{equation*}
v(t,x)=\bE v(Z_\tau(t,x))=\bE v(t+\tau,Y_\tau(x)).
\end{equation*}
Thus, it is enough to show that $v\leq g$ on $\partial_p Q$.
It is obvious that $v(T,y)=0$ for any $y\in (0,K)$ and $v(t,0)=0$ for
any $t\in [0,T]$.
Also, since $(y-K)_{+}\leq y$ for any $y\geq 0$ and $e^{\sigma w_s-s/2}$
is a martingale, we have
\begin{equation*}
v(t,K) \leq \bE\left(Ke^{\sigma w_{T-t}-(T-t)/2}\right)= K,\quad
\forall t\in [0,T].
\end{equation*}
It thus follows that $v\leq \tilde v$ in $Q$.
Therefore, we have
\begin{align*}
v(t,x)
&\leq \tilde v(t,x)=K\bP\{xe^{\sigma w_\tau-\tau/2}=K\}\\
&\le K\bP\set{\sup_{0\le s<T-t}(\sigma w_s-s/2)\ge \ln (K/x)}\\
&\le K\bP\set{\sup_{0\le s<T}(\sigma w_s-s/2)\ge \ln (K/x)}\\
&\le K\bP\set{\sup_{0\le s<T}w_s\ge \sigma^{-1}\ln (K/x)}
=2K\bP\set{w_T\ge \sigma^{-1}\ln (K/x)}\\
&\le K\sqrt{\frac{2}{\pi}} \frac{\sigma\sqrt{T}}{\ln (K/x)}
\exp\set{-\frac{(\ln(K/x))^2}{\sigma^2 T}},
\end{align*}
where, in the last step, we have used an inequality
\begin{align*}
\int_\alpha^\infty e^{-x^2/2}\,dx &\leq
\frac{1}{\alpha}\int_\alpha^\infty x e^{-x^2/2}\,dx
=\alpha^{-1} e^{-\alpha^2/2},\quad \forall\alpha>0.
\end{align*}
The lemma is proved.
\end{proof}

Now, we prove the statement \eqref{eq4.10}.
We shall assume that $K>0$ since the case when $K<0$ can be treated in a similar way.
We extend $v$ to zero on $(T,\infty)\times (0,K)$.
Then, it is easy to see that $v$ belongs to $\Ho_{t,x;\; loc}([0,\infty)\times (0,K))$ and satisfies both the equation \eqref{eq:P01a} and the estimate \eqref{eq:M07g} in $[0,\infty)\times (0,K)$.
Denote
\begin{equation*}
Q_{r}(t_0,x_0):=(t_0,t_0+1)\times(x_0-r,x_0+r),\quad
\Pi_\rho:=(0,\rho^2)\times (-\rho,\rho).
\end{equation*}
Let $(t_0,x_0)\in [0,T)\times (0,2K/3)$ and set $r=r(x_0):=x_0/2$.
Define
\begin{equation*}
V(t,x)=v(t_0+ t,x_0+rx)
\end{equation*}
It is easy to verify that $V(t,x)$ satisfies the equation
\begin{equation*}
V_t+\frac{1}{2} a(x) V_{xx}=0\quad\text{in}\quad \Pi_1,
\end{equation*}
where $a(x):= \sigma^2 r^{-2}(x_0+rx)^2$ satisfies
\begin{equation*}
\sigma^2 \leq a(x)\le 9\sigma^2 \quad\text{and}\quad |a'(x)|\leq 6\sigma^2, \quad \forall x\in (-1,1).
\end{equation*}
By the Schauder estimates (see e.g., \cite{Kr2}), there is some $C=C(\sigma)$ such that
\[
|V_x(0,0)|+|V_{xx}(0,0)|+|V_t(0,0)|\leq C \sup_{\Pi_1}|V|,
\]
while by the estimate \eqref{eq:M07g} we have
\[
\sup_{\Pi_1} |V|=\sup_{Q_{r}(t_0,x_0)} |v| \leq
\frac{\sigma K\sqrt{2T}}{\sqrt{\pi}\ln |K/3r|}
\exp\set{-\frac{(\ln|K/3r|)^2}{\sigma^2 T}}.
\]
Hence, there is some $r_0=r_0(K)$ and $N=N(\sigma, T, K)$ such that
\begin{equation}
\label{eq:P19d}
 |V_x(0,0)|+ |V_{xx}(0,0)|+|V_t(0,0)|\leq N e^{-(\ln r)^2/N},
\end{equation}
provided that $r<r_0$.
Note that \eqref{eq:P19d} translates to as follows:
There is some $r_0=r_0(K)$ and $N=N(\sigma, T, K)$ such that
if $x_0<r_0$ then
\[
x_0 |v_x(t_0,x_0)|+ x_0^2 |v_{xx}(t_0,x_0)|+|v_t(t_0,x_0)|\leq N e^{-(\ln x_0)^2/N}.
\]
The above estimate obviously implies \eqref{eq4.10}.
\hfill\qedsymbol

\begin{acknowledgment}
The authors thank Jan Ve\v{c}e\v{r} for very helpful discussion.
\end{acknowledgment}


\end{document}